\theoremstyle{plain}
\newtheorem{theo}{Theorem}[section]
\newtheorem{lem}[theorem]{Lemma}
\newtheorem{defi}[theorem]{Definition}
\newtheorem{prop}[theorem]{Proposition}
\newtheorem{example}[theorem]{Example}
\newtheorem{remark}[theorem]{Remark}
\newtheorem{assump}{Assumption}
\DeclareMathOperator{\Ad}{Ad}
\DeclareMathOperator{\ad}{ad}
\DeclareMathOperator{\Id}{Id}
\newcommand{\R}{\mathbb{R}}
\newcommand{\mc}[1]{\mathcal{#1}}
\newcommand{\ud}{\,\mathrm{d}}
\providecommand{\abs}[1]{\lvert#1\rvert}
\newcommand{\ms}[1]{\mathbb{#1}}
\newcommand{\g}{\mathfrak{g}}
\def\o{\circ}
\def\la{\lambda}
\def\ph{\varphi} 
\def\ps{\psi}
\def\Ph{\Phi} 
\def\Ps{\Psi} 
\def\Om{\Omega} 
\def\i{^{-1}} 
\def\x{\times} 
\def\g{{\mathfrak g}} 
\def\p{\partial}
\def\ad{\operatorname{ad}} 
\def\Ad{\operatorname{Ad}}
\let\on=\operatorname
\title{Mixture of Kernels and Iterated semidirect Product of Diffeomorphisms Groups.\thanks{ERC grant \emph{Five Challenges in Computational Anatomy}}}
\author{
Martins Bruveris\thanks{Department of Mathematics, Imperial College, London SW7 2AZ, UK. \tt{m.bruveris08@ic.ac.uk}} \and 
Laurent Risser\thanks{Institute of Biomedical Engineering, University of Oxford, UK and Institut de Math\'ematiques de Toulouse, UMR CNRS 5219, FR. \tt{lrisser@math.univ-toulouse.fr}} \and 
Fran\c cois-Xavier Vialard\thanks{Ceremade, UMR CNRS 7534, Universit\'e Paris-Dauphine, FR. \tt{vialard@ceremade.dauphine.fr}}}
\begin{document}

\maketitle

\begin{abstract}
In the framework of \emph{large deformation diffeomorphic metric mapping} (LDDMM), we develop a multi-scale theory for the diffeomorphism group based on previous works \cite{Bruveris_MK,Risser_MK,SommerKernelBundle1,SommerKernelBundle2}.
The purpose of the paper is (1) to develop in details a variational approach for multi-scale analysis of diffeomorphisms, (2) to generalise to several scales the semidirect product representation 
and (3) to illustrate the resulting diffeomorphic decomposition on synthetic and real images. We also show that the approaches presented in \cite{SommerKernelBundle1,SommerKernelBundle2} and the mixture of kernels of [RVW+11] are equivalent.
\end{abstract}

\begin{keywords} 
Multi-scale, group of diffeomorphisms, mixture of kernels, semidirect product of groups
\end{keywords}

\pagestyle{myheadings}
\thispagestyle{plain}

\section{Introduction}
In this paper, we develop a multi-scale theory for groups of diffeomorphisms in the context of image registration. 
Very little work has been done in this direction, but we can mention \cite{Kaplan_themorphlet} which addresses multi-scale on diffeomorphisms with the same goals as wavelets. Our motivations differ from such approaches. In this introduction, we first introduce the context of our work and then present our goals.
The setting of \emph{large deformation diffeomorphic metric mapping} (LDDMM) has been introduced in seminal papers \cite{Trouve1998,DuGrMi1998} and this approach has been applied in the field of computational anatomy \cite{GrMi1998}. The initial problem deals with the diffeomorphic registration of two given biomedical images or shapes. An important aspect of this model is the use of reproducing kernel Hilbert spaces (RKHS) of vector valued functions to define the Lie algebra of the group of diffeomorphisms. 
We now present the model developed in \cite{Begetal2005}.
\begin{defi}
Let $\Omega$ be a domain in $\R^d$. An admissible reproducing kernel Hilbert space $H$ of vector fields is a Hilbert space of $C^1(\Omega)$ vector fields such that there exists a positive constant $M$, s.t. for every $v \in H$ the following inequality holds
\begin{equation}
\| v\|_{1,\infty} \leq M \| v\|_{H}\,.
\end{equation}
\end{defi}

\begin{remark}
Note that our assumption is more demanding than the one defining reproducing kernel Hilbert spaces. Indeed, a reproducing kernel Hilbert space (of vector fields) is a Hilbert space $H$ of functions from $\Omega$ to $\R^d$ such that the pointwise evaluation maps denoted by $\delta_x : f \in H \mapsto f(x) \in \R^d$ are continuous. Denoting $K: H^* \mapsto H$ the Riesz isomorphism between $H^*$ (the dual of $H$) and $H$, the reproducing  kernel associated with the space $H$ is defined by $\mathsf{k}(x,y) = (\delta_x, K \delta_y) \in L(\R^d,\R^d)$, where the bracket $(\cdot,\cdot)$ denotes the dual pairing. In other words, for any points $x,y \in \Omega$, the kernel $\mathsf{k}(x,y)$ is a linear map from $\R^d$ to itself. The kernel completely specifies the reproducing kernel Hilbert space $H$: we refer the reader to \cite{saitohbook} for more informations on RKHS. 
\end{remark}
\begin{example}
In practice, a Gaussian kernel $\mathsf{k}(x,y) = e^{-\|x-y\|_{\R^d}^2/\sigma^2}\on{Id}_{\R^d}$ is often used and we will call $\sigma$ the width of the Gaussian kernel. The norm of a vector field $v$ in the corresponding RKHS $H$ can be computed via a Fourier transform by
\begin{equation}
\|v\|_{H}^2 = \|\hat{\mathsf{k}}^{-1/2}\hat{v}\|_{L^2}^2\,,
\end{equation}
where $\hat{f}$ denotes the Fourier transform of $f$.
\end{example}
The diffeomorphism group $G$ associated with the RKHS $H$ is given by 
the flows of all time-dependent vector fields in $L^2([0,1],H)$. In order to have a well-defined flow, we assume that the vector fields $v \in H$ vanish on the boundary of $\Omega$. Hence, this boundary will be fixed by the flow. We assume implicitly that hypothesis in the rest of the paper. More precisely, we define

\begin{equation}G:=\left\{ \varphi(1) \, | \, v \in L^2([0,1],H)\right\},
\end{equation}
where $\varphi(1)$ is the flow at time $1$ of the vector field $v$, {\it i.e.}
\begin{equation}\label{eq:integrationVF}
\begin{cases}
\partial_t \varphi(t) = v(t) \circ \varphi(t) \\
\varphi(0) = \on{Id}\,.
\end{cases}
\end{equation}

The main idea of the LDDMM approach is to deform the objects of interest via a deformation of the whole ambient space. Therefore, an action of the group $G$ on the object space $Q$ is introduced and denoted by $\varphi . I$, where $\varphi$ is an element of the group and $I$ is an object. The set of objects of interest can be of various types, such as groups of points, measures, currents or images. We also assume that there exists a distance on these spaces: for example the usual distance on a normed vector space, e.g. for images, one would use the $L^2$ norm: $d^2(I_0, I_1) = \int_\Om |I_0(x)-I_1(x)|^2 dx$. One motivation underlying diffeomorphic matching via large deformations is to quantify the geometric variability of biological shapes and their changes. The first common step consists in solving the diffeomorphic matching problem, which reduces to the minimisation of the functional
\begin{equation} \label{MatchingFunctional}
\mathcal{F}(v) = \frac 12  \int_0^1\! \|v(t)\|_{H}^2 \, \ud t + d^2(\varphi(1).I_0,I_{\on{target}}) \,,
\end{equation}
where $v \in L^2([0,1],H)$ and $I_0,I_{\on{target}} \in Q$ and the distance function $d$ enforces the matching accuracy. This minimisation problem enables to match images via geodesics on the group $G$, if $G$ is endowed with the right-invariant metric obtained by translating the inner product $\langle.,.\rangle_H$ on the Lie algebra $H$ to the other tangent spaces. More importantly, by its action on the space of images, the right-invariant metric on the group induces a metric on the orbits of the image space and the final deformation is completely encoded in the so-called initial momentum \cite{vmty04}. This initial momentum has the same dimension as the image. Since it is an element of a linear space, it can be used to perform statistics on it \cite{LNCS63630529}.

\begin{figure}[htb]
\begin{center}
\begin{tabular}{cc}
\begin{tabular}{cc}
$\null$\hspace{-0.35cm}
\includegraphics[width= 2.5cm]{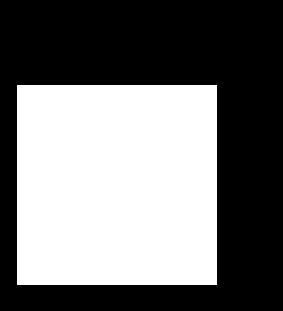}
&
$\null$\hspace{-0.5cm}
\includegraphics[width= 2.5cm]{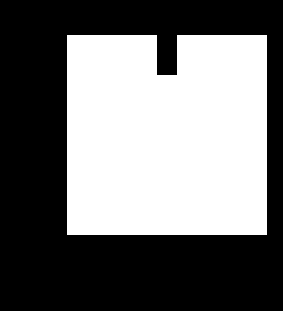}
\\
Source Image $I_S$&
Target Image $I_T$
\end{tabular}
&
$\null$\hspace{-0.6cm}
\begin{tabular}{ccc}
$\null$\hspace{-0.35cm} \includegraphics[width= 2.1cm]{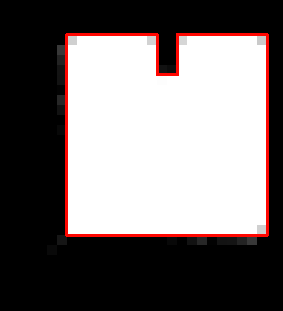}&
$\null$\hspace{-0.45cm} \includegraphics[width= 2.1cm]{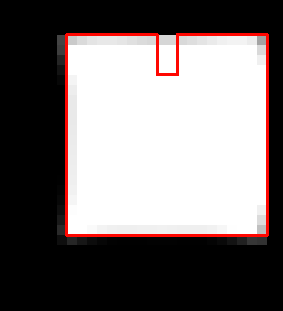}&
$\null$\hspace{-0.45cm} \includegraphics[width= 2.1cm]{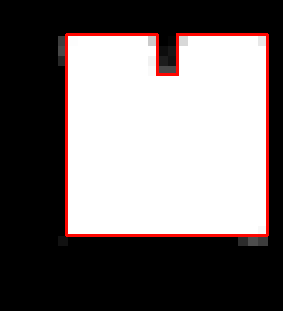}\\ 
$\null$\hspace{-0.35cm} \includegraphics[width= 2.1cm]{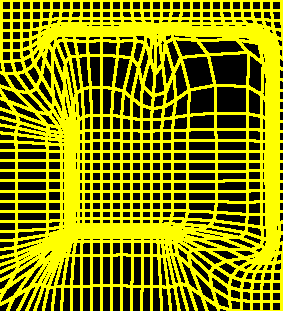}&
$\null$\hspace{-0.45cm} \includegraphics[width= 2.1cm]{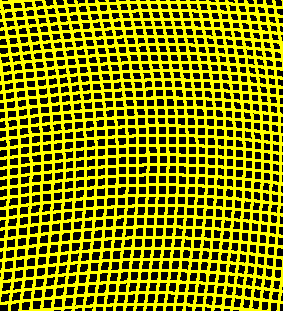}&
$\null$\hspace{-0.45cm} \includegraphics[width= 2.1cm]{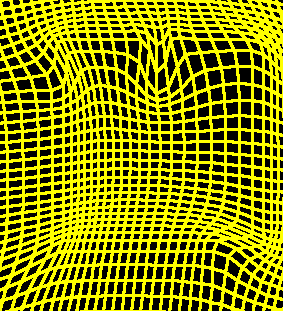}\\ 
$\null$\hspace{-0.35cm}  K$_{1}$   &
$\null$\hspace{-0.45cm}  K$_{10}$ &
$\null$\hspace{-0.45cm}  MK5 \\ 
\end{tabular}
\\
{\bf (a)}&{\bf (b)}
\end{tabular}
\caption{
Influence of the smoothing kernel when registering two images containing feature differences at several scales simultaneously in the LDDMM framework (from \cite{Risser_MK}).
{\bf (a)} Region of interest of the  source and target images $I_S$ and $I_T$ containing the registered shapes.
{\bf (b)} Registration of the images $I_S$ and $I_T$ using different kernels: ($\mathsf{k}_{1}$ and $\mathsf{k}_{10}$) 
Gaussian kernels of width $\sigma=1$ and $\sigma=10$ pixels; (MK5) sum of 5  Gaussian kernels linearly sampled between $10$ and $1$ pixels. 
Diffeomorphic transformations of $I_S$ at $t=1$ (final deformation) are shown on the top and corresponding homogeneous grids (step = 1 pixel) are on the bottom.
}
\label{IllusBase}
\end{center}
\end{figure}

In this article, we  particularly focus on the choice of the Lie algebra, the RKHS of vector fields $H$. In theory, as soon as the RKHS of vector fields contains enough functions so that the Stone-Weierstrass theorem can be applied, the generated group of diffeomorphisms will be, loosely speaking, dense in $\on{Diff}_0(\Omega)$, the group of diffeomorphisms of $\Omega$ that leave the boundary $\partial \Omega$ fixed. As a consequence, the matching will be as accurate as desired, according to the weight of the matching term, for a large class of underlying deformations. In practice, the choice of the metric on the Lie algebra is however critical. The main reason is that solving \eqref{MatchingFunctional} for different choices of RKHS can produce equally good deformations according to the similarity measure but the resulting deformations may present a wide variety of forms. This feature is common to inverse problems, where the prior or regularising term (in our case, the RKHS) is not known. In this case many priors can be chosen in order to turn the problem into a well-posed minimisation problem. Another practical issue is the numerical convergence of the algorithm. Let us explain this in the case of Gaussian kernels, which is the standard choice for solving \eqref{MatchingFunctional}. In that case, the RKHS associated with the Gaussian kernels of decreasing width form an increasing sequence of spaces. The choice of this width $\sigma$ describes a trade-off between the smoothness of the deformation and the  matching accuracy. For example, a large width produces very smooth deformations with a poor matching  accuracy of the structures having a size smaller than $\sigma$. Indeed, the cost of fine deformations is high and it cannot be achieved in practice. 
On the contrary,  a small width results in a good matching accuracy but the resulting deformations will have Jacobians with a large determinant, which is also undesirable. It is therefore a natural step to introduce a mixture of Gaussian kernels with different widths. 
By using a mixed kernel, constructed as the weighted sum of several Gaussian kernels the estimated deformations can be smooth and provide a good matching quality.
This is one of the main results of \cite{Risser_MK} that we illustrate in Fig.~\ref{IllusBase}.
Naturally, using mixed kernels introduces more parameters in the algorithm, which need to be tuned. Practical insights about how to parametrise the scales and weights of multiple kernels are given in  \cite{Risser_MK}. 
The idea of using a mixture of kernels for matching is also directly connected to \cite{Bruveris_MK}, where it is proven that there is an equivalence between the matching with a sum of two kernels and the matching via a semidirect product of two groups. 
The work on the metric underlying the LDDMM methodology \cite{Begetal2005} has also been followed up by \cite{SommerKernelBundle1,SommerKernelBundle2}, where the authors introduce the notion of a bundle of kernels and argue that this general framework can be used to add a multi-scale approach to LDDMM. In passing, we prove that their approach reduces to the mixture of kernels. We give a self-contained and simple proof of this result based on Lagrange multipliers.\\

We emphasise that our work deals with the multi-scale properties of the smoothing kernel $\mathsf{k}$. This is different from insights about standard multi-resolution algorithms. In these algorithms, the registered images are subsampled to a higher or lesser degree, as opposed to the smoothing kernels
which remain the same whatever the image resolution. The regularisation of the deformation therefore depends on the amount of sub-sampling. A mutli-resolution algorithm is also usually much faster than a single-resolution one.
Multi-resolution strategies can be adopted in the LDDMM framework,  but the smoothing kernel $\mathsf{k}$ should be discretised with the same level of coarseness as the images. The metric  on $H$  is indeed related to the compared structures and not to the  image resolution.  This justifies the definition of multi-scale kernels to compare real images having feature differences at several scales simultaneously.\\

We  also address a question of crucial interest which is the description of the influence of each scale on the final deformation. This question is non-trivial due to the fact that the group of diffeomorphisms is not a linear space, so that standard multi-scale methods do not apply - they would ignore the group structure of the space. In particular, since our multi-scale approach is developed on the Lie algebra $H$, we must develop a framework to decompose the final diffeomorphism into separate scales. This is of great practical importance: 
For instance, we may be interested in the \lq\lq{}high-frequency\rq\rq{} deformations since the \lq\lq{}low-frequency\rq\rq{} deformations may be biased due to an initial rigid registration. In addition, there is no completely established justification of what is an optimal rigid alignment between two brain images although their comparison at a fine scale (about 1 millimetre) is of high interest in neuroimaging.  
 To this end, we develop an extension of the semidirect product of groups of \cite{Bruveris_MK}, first to more than two discrete scales and then to a continuum of scales. This approach introduces a decomposition of the final deformation into several diffeomorphisms at each scale of interest. 
Using this decomposition, we may extract more meaningful statistical information as shown in the simulation section of our paper.\\

This article is divided into three parts: the first part focuses on a finite number of scales while the second treats the case of a continuum of scales. The last part of the paper is devoted to numerical simulations, where we show in particular the decomposition on the given scales of the optimised diffeomorphism.

\section{A finite number of scales} \label{FiniteNumberScales}
\subsection{The finite mixture of kernels}
For the sake of simplicity, we first treat the case of a finite set of admissible Hilbert spaces $H_i$ with kernels $\mathsf{k}_i$ and Riesz isomorphisms $K_i$ between $H_i^*$ and $H_i$ for $i =  1, \ldots, n$.
Denoting $H = H_1 + \ldots + H_n$, the space of all functions of the form $v_1+\ldots+v_n$ with $v_i \in H_i$, the norm proposed in \cite{SommerKernelBundle1} as well as in \cite{Bruveris_MK} is defined by
\begin{equation} \label{InfNorm}
\|v\|_{H}^2 = \inf \left\{ \sum_{i=1}^n \|v_i\|_{H_i}^2 \, \Big | \, \sum_{i=1}^n v_i = v \right \}\,.
\end{equation}
 The minimum is achieved for a unique $n$-tuple of vector fields and the space $H$, endowed with the norm defined by \eqref{InfNorm}, is complete. 
The following lemma is the main argument to prove the equivalence between the approaches of \cite{SommerKernelBundle1} and \cite{Risser_MK}. This lemma is an old result that can be found in \cite{Aronszajn1950}.
However, for the sake of completeness, we present a simple proof based on the Lagrange multiplier rule. Moreover, if one wants to skip the technical details of the proof, the formal application of this variational calculus theorem immediately gives the result. We outline the formal proof of the lemma: If one has to minimise the sum defined in Formula \eqref{InfNorm} then one can introduce a Lagrange multiplier $p$ and obtain a stationary point of the Lagrangian
\begin{equation}
\ell_v(v_1, \ldots,v_n,p) = \frac 12 \sum_{i=1}^n \|v_i\|_{H_i}^2 + \left(p,v-\sum_{i=1}^n v_i\right)\,,
\end{equation}

where the notation $(\cdot,\cdot)$ stands for the dual pairing.
Therefore, one has $v_i = K_i p$ and
\begin{equation}
v = \sum_{i=1}^n K_ip \, .
\end{equation}
This formally shows that optimising at several scales simultaneously reduces to a mixture of kernels, since the kernel is then given by $\mathsf{k} = \sum_{i=1}^n \mathsf{k}_i$.
\begin{lem}\label{ReductionLemma}
The formula \eqref{InfNorm} induces a scalar product on $H$ which makes $H$ a RKHS, and its associated kernel is $\mathsf{k} := \sum_{i=1}^n \mathsf{k}_i$, where $\mathsf{k}_i$ denotes the kernel of the space $H_i$.
\end{lem}

\begin{proof}
Let $x \in \Omega$, $\alpha \in \R^d$ and $\delta_x^\alpha$ be the pointwise evaluation defined by $\delta_x^\alpha(v) \doteq \langle v(x),\alpha \rangle_{\R^d}$. By hypothesis on each $H_i$, $\delta_x^\alpha$ is a linear form which implies that $$\mathrm{ev}_{(x,\alpha)} : (v_i)_{i=1 \ldots n} \in \bigoplus_{i=1}^n H_i \mapsto \sum_{i=1}^n \delta_x^\alpha(v_i) \in \ms{R}$$ is also a linear form on $\bigoplus_{i=1}^n H_i$. Therefore we see that the intersection of the kernels $S:=\bigcap_{(x,\alpha) \in \Omega \times \R^d} \mathrm{ev}_{(x,\alpha)}^{-1}(\{0\})$ is a closed subspace of $\bigoplus_{i=1}^n H_i$.
Let $\pi$ be the orthogonal projection on $S^{\perp}$. For any $v \in H$ that can be written as $v = \sum_{i=1}^n u_i$ with $(u_i)_{i=1 \ldots n} \in \bigoplus_{i=1}^n H_i$, there exists a unique $(v_i)_{i=1 \ldots n} \in \bigoplus_{i=1}^n H_i$ minimising the functional $N\left((v_i)_{i=1 \ldots n} \right) = \frac{1}{2} \sum_{i=1}^n \|v_i\|_{H_i}^2$ and satisfying $\sum_{i=1}^n v_i = v$.
This unique element is given by $\pi((u_i)_{i=1 \ldots n})$ as a consequence of the  projection theorem for Hilbert spaces \cite{bre83}. Therefore $H$ is isometric to $S^\perp$ and hence $H$ is a Hilbert space.

We now introduce the Lagrangian 
\begin{equation}\label{AugmentedLagrangian}
\ell_v(v_1, \ldots,v_n,p)  =\frac{1}{2} \sum_{i=1}^n \|v_i\|_{H_i}^2 + \left(p,v-\sum_{i=1}^n v_i \right)_{H^*,H}\,.
\end{equation}
defined on $\bigoplus_{i=1}^n H_i \times H^*$. Remark that the norm $\|\cdot \|_{H}$ makes the injection $j_i:H_i \hookrightarrow H$ continuous and as a consequence $j_i^*:H^* \rightarrow H_i^*$ is defined by duality. Therefore the pairing $(p,v_i)$ in Formula \eqref{AugmentedLagrangian} is well-defined. In addition, $\ell_{v}$ is also Fr\'echet differentiable.
One can easily check that, for a given $v \in H$, a stationary point of the Lagrangian
is $(\pi((v_i)_{i=1 \ldots n}),p)$ where $Kp=v$ and $K$ is the Riesz isomorphism between $H^*$ and $H$.
Then, at this stationary point, we have 
\begin{equation}
\begin{cases}
v_i = K_i(p)  \text{ for } i = 1 \ldots n \\
v = \sum_{i=1}^n  v_i = \sum_{i=1}^n K_i(p) \,.
\end{cases}
\end{equation}
Note that in the previous formula, we could have written the heavier notation $v_i = K_i(j_i^*p)$ to be more precise.
This implies that the Riesz isomorphism between $H^*$ and $H$ is given by the map $p \in H^* \mapsto \sum_{i=1}^n K_i(p) \in H$. 
Moreover, we have $ \bigcap_{i=1}^n H_i^* \subset H^*$: For $p \in \bigcap_{i=1}^n H_i^*$ we have,
\begin{multline*}
|(p,v)| \leq \sum_{i=1}^n |(p,v_i)| \leq \sum_{i=1}^n \|p\|_{H_i^*} \|v_i\|_{H_i}  \stackrel{C.S.}{\leq} \sqrt{\sum_{i=1}^n  \|p\|_{H_i^*}^2} \sqrt{\sum_{i=1}^n \|v_i\|_{H_i}^2}\,,
\end{multline*}
which is true for any decomposition of $v$ so that
\begin{equation}
|(p,v)| \leq \sqrt{\sum_{i=1}^n \|p\|_{H_i^*}^2} \|v\|_H\,.
\end{equation}
Since $\delta_x^\alpha  \in \bigcap_{i=1}^n H_i^* \subset H^*$, $H$ is a RKHS and its kernel is $\mathsf{k}=\sum_{i=1}^n \mathsf{k}_i$.
\end{proof}

We now define the isometric injection of $H$ in $\bigoplus_{i=1}^n H_i$ which is simply the inverse of the projection $\pi$ introduced in the proof above.
\begin{defi}
We denote by $\pi^{-1} : H \mapsto \bigoplus_{i=1}^n H_i$ the map defined by $\pi^{-1}(v) = \left(K_i (K^{-1} (v) )\right)_{i=1 \ldots n}$.
\end{defi}

The non-linear version of this multi-scale approach to diffeomorphic matching problems is the minimisation of
\begin{equation} \label{MultiScaleMatching}
\mathcal{E}(v) =\frac 12  \int_0^1\! \sum_{i=1}^n \|v_i(t)\|_{H_i}^2 \, \ud t + d^2(\varphi(1).I_0,I_{\on{target}}) \,,
\end{equation}
defined on $\bigoplus_{i=1}^n H_i $. Recall that $\varphi(t)$ is the flow generated by $v(t) := \sum_{i=1}^n v_i(t)$. 
The direct consequence of Lemma \ref{ReductionLemma} is the following proposition:
\begin{prop}
\label{EquivalenceMatching}
The minimisation of $\mathcal{E}$ reduces to the minimisation of 
\begin{equation} \label{ReducedMatching}
\mathcal{F}(v) =\frac 12  \int_0^1\! \|v(t)\|_{H}^2 \, \ud t + d^2(\varphi(1).I_0,I_{\on{target}}) \,.
\end{equation}
\end{prop}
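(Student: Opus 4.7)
The plan is to exploit the key structural observation that both the data-fidelity term $d(\varphi_1.q_0,q_{\text{target}})$ and the flow $\varphi_t$ itself depend only on the sum $v(t) := \sum_{i=1}^k v_i(t)$ and not on the individual components. Hence the only coupling between the two optimization problems is through the regularization term, and Lemma \ref{ReductionLemma} relates the two regularizers \emph{pointwise in time}. Once this is in place, the reduction follows by showing equality of infima via two matching inequalities.

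First I would establish the direction $\inf \mathcal{F} \leq \inf \mathcal{E}$. Given any $(v_1,\ldots,v_k) \in L^2([0,1],\bigoplus_i H_i)$, set $v(t) = \sum_i v_i(t)$; then $v \in L^2([0,1],H)$ (by continuity of the summation map, which is the adjoint of the injection $\pi$) and the flow is unchanged. By the definition \eqref{InfNorm} of the infimum norm applied pointwise in $t$,
\begin{equation}
\|v(t)\|_H^2 \leq \sum_{i=1}^k \|v_i(t)\|_{H_i}^2 \qquad \text{for a.e.\ } t \in [0,1],
\end{equation}
which integrates to $\mathcal{F}(v) \leq \mathcal{E}(v_1,\ldots,v_k)$.

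For the reverse direction I would use the canonical isometric injection $\pi : H \to \bigoplus_i H_i$, $\pi(v) = (K_i(K^{-1}v))_{i=1\ldots k}$, defined right after Lemma \ref{ReductionLemma}. Given $v \in L^2([0,1],H)$, define $v_i(t) := \pi_i(v(t))$. Since $\pi$ is bounded linear, measurability of $t \mapsto v(t)$ is preserved, and $v_i \in L^2([0,1],H_i)$. By construction $\sum_i v_i(t) = v(t)$, so the flow and the matching term are unchanged, while the isometry property yields $\sum_i \|v_i(t)\|_{H_i}^2 = \|v(t)\|_H^2$ for a.e.\ $t$. Integrating gives $\mathcal{E}(\pi \circ v) = \mathcal{F}(v)$. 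Combining the two inequalities proves that the infima coincide and that minimizers correspond via summation in one direction and via $\pi$ in the other.

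The calculation itself is essentially a direct application of Lemma \ref{ReductionLemma} and requires no new analytic work; the only point I would pause to verify is the measurability/integrability of $t \mapsto v_i(t)$, which is the main potential obstacle but is handled by the continuity of $\pi$. Thus the heart of the argument is really the pointwise-in-time reduction lemma already proved; everything else is bookkeeping.
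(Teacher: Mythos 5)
Your proposal is correct and takes essentially the same route as the paper: both arguments rest on the pointwise-in-time inequality $\|v(t)\|_H^2 \leq \sum_i \|v_i(t)\|_{H_i}^2$ from Lemma \ref{ReductionLemma} together with the isometric injection $\pi$, which realizes the infimum and leaves the flow (hence the data term) unchanged. The paper phrases this as ``any minimizer of $\mathcal{E}$ must satisfy $(v_i)=\pi(v)$'' while you phrase it as equality of infima via two matching inequalities, but these are the same argument; your explicit attention to measurability of $t\mapsto\pi(v(t))$ is a welcome, if minor, addition.
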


\begin{proof}
Obviously, minimising $\mathcal{F}$ is the same as minimising $\mathcal{E}$ restricted to $\pi^{-1}(H)$.
Note first that for any $n$-tuple $(v_i)_{i=1 \ldots n} \in L^2([0,1],\bigoplus_{i=1}^n H_i)$ we have $\pi((v_i)_{i=1 \ldots n}) \in  L^2\left([0,1],\bigoplus_{i=1}^n H_i\right)$. Denoting  $v = \sum_{i=1}^n v_i$ we have $\| \pi^{-1}(v) \|_{L^2} \leq \| (v_i)_{i=1 \ldots n} \|_{L^2}$
with equality if and only if $\pi^{-1}(v) = (v_i)_{i=1 \ldots n}$.
Therefore, it follows that if $(v_i)_{i=1 \ldots n} \in L^2([0,1],\bigoplus_{i=1}^n H_i)$ is a minimiser of $\mathcal{E}$ then
\begin{equation}
\mathcal{E}(\pi(v)) \leq \mathcal{E}((v_i))\,,
\end{equation}
which implies $\pi(v) = (v_i)_{i=1 \ldots n}$ and the result.
\end{proof}

\begin{remark}
To a minimising path $v(t) \in L^2([0,1],H)$ corresponds a minimising path in $\bigoplus_{i=1}^n H_i $ via the map $\pi^{-1}$. In other words, the optimal path can be decomposed on the different scales using each kernel $\mathsf{k}_i$ present in the reproducing kernel of $H$, $\mathsf{k} = \sum_{i=1}^n \mathsf{k}_i$.
\end{remark}

\subsection{Iterated semidirect product}
\label{sec_sdp}

Until now, the scales have been introduced only on the Lie algebra of the diffeomorphism group and a remaining question is how to decompose the flow of diffeomorphisms according to these scales. An answer in the case of two scales is given in \cite{Bruveris_MK}, where the flow is decomposed with the help of a semidirect product of groups and the whole transformation is encoded in a large-scale deformation and a small-scale one. The underlying idea is to represent the flow of diffeomorphisms $\ph(t)$ by a pair $(\ph_1(t),\ph_2(t))$ where $\ph_1(t)$ is given by the flow of the vector field $v_1(t)$ and $\ph_2(t) := \ph(t) \circ \left(\ph_1(t)\right)^{-1}$. Note in particular that $\ph_2(t)$ is not the flow of $v_2(t)$. More precisely, we have
\begin{equation}
\begin{cases}
\partial_t \ph_1(t) = v_1(t) \circ \ph_1(t) \\
\partial_t \ph_2(t) = (v_1(t)+v_2(t)) \circ  \ph_2(t) - \Ad_{\ph_2(t)}v_1(t) \circ \ph_2(t) \,.
\end{cases}
\end{equation}

The last equation can be derived as follows,
\begin{align}
\partial_t \ph_2(t) &= \p_t \left(\ph(t) \o \ph_1(t)\i\right) \\
&= \p_t \ph(t) \o \ph_1(t)\i - D\ph(t).D\ph_1(t)\i.\left(\p_t \ph_1(t) \o \ph_1(t)\i \right) \\
&= (v_1(t) + v_2(t)) \o \ph_2(t) - D\ph_2(t).v_1(t) \\
&= (v_1(t) + v_2(t)) \o \ph_2(t) - \Ad_{\ph_2(t)}v_1(t) \circ \ph_2(t) \,.
\end{align}
Here $\Ad_\ph v$ denotes the adjoint action of the group of diffeomorphisms on the Lie algebra of vector fields and is given by
\begin{equation}
\Ad_\ph v(x) = (D\ph.v) \o \ph\i (x) = D_{\ph\i(x)}\ph.v(\ph\i(x))\,.
\end{equation} 
We assume that $\ph_1(t)$ contains the small-scale information and $\ph_2(t)$ the large-scale deformations.  Interestingly, as shown in \cite{Bruveris_MK}, this decomposition of the diffeomorphism flow corresponds to a semidirect product of groups. This framework can be generalised to a finite number of scales as follows.

Given $n$ scales, we want to represent $\varphi(t)$ by an $n$-tuple $(\ph_1(t),\ldots,\ph_n(t))$, with $\ph_1(t)$ corresponding to the finest scale and $\ph_n(t)$ to the coarsest scale. The geometric construction underlying the decomposition into multiple scales is the semidirect product, introduced in the following lemma. We want to consider $n$-tuples $(\ph_1,\ldots,\ph_n)$ as representing the diffeomorphism $\ph_1\o\dots\o\ph_n$. Given two $n$-tuples $(\ph_1,\ldots,\ph_n)$ and $(\ps_1,\ldots,\ps_n)$ the semidirect product multiplication is defined in such a way, that their product represents the concatenated diffeomorphism $\ph_1\o\dots\o\ph_n\o\ps_1\o\dots\o\ps_n$.

\begin{lem}
\label{lem_sdp1}
Let $G_1 \supseteq G_{2} \supseteq \ldots \supseteq G_n$ be a chain of Lie groups. One can define the $n$-fold semidirect product multiplication on the set $G_1\x\dots \x G_n$ via
\begin{equation}
 (g_1, \ldots, g_{n}) \cdot (h_1, \ldots, h_{n}) = (g_1 \on{c}_{g_2\cdots g_{n}}h_1, g_2 \on{c}_{g_3\cdots g_{n}} h_2, \ldots, g_{n}h_{n})\,,
\end{equation}
with $ \on{c}_g h = ghg\i$ denoting conjugation. Then given the right-trivialised tangent vector $v(t) = (v_1(t),\ldots,v_n(t))$ of the curve $g(t)=(g_1(t),\ldots,g_n(t))$, the curve can be reconstructed via the ODE 
\begin{equation}
\label{sdp1_rec}
 \p_t g_k(t) = \left( v_k(t) + (\Id-\Ad_{g_k(t)}) \sum_{i=k+1}^{n} v_i(t) \right) g_k(t)\,,
\end{equation}
 if $k < n$
and $\p_t g_n(t) = v_n(t)g_n(t)$. Here $v_kg_k \in T_{g_k} G_k$ denotes the action of the group on its tangent space obtained by differentiating the left-multiplication.  We shall denote this semidirect product by
\begin{equation} 
G_1 \rtimes \dots \rtimes G_n
\end{equation}
to emphasise that each sub-product $G_1 \rtimes \dots \rtimes G_k$ is a normal subgroup of the whole product.
\end{lem}

\begin{proof}
Verifying the axioms for the group multiplication is a straight forward, if slightly longer calculation. The inverse is given by
\[ (g_1,\ldots,g_n)\i = (\on{c}_{(g_2\cdots g_n)\i} g_1\i,\ldots,g_n\i)\enspace. \]
The right hand side of equation \eqref{sdp1_rec} can be obtained by differentiating the group multiplication at the identity, i.e. computing $\p_t (h(t)\cdot g)|_{t=0}$ with $g$ fixed, $h(0)=\Id$ and $\p_t h(t)|_{t=0} = v$. Step-by-step the computation is as follows.
\begin{align}
\p_t (h(t) \cdot g)_k|_{t=0} &= \p_t h_k(t) \on{c}_{h_{k+1}(t)\cdots h_{n}(t)}g_k|_{t=0} \\
&= v_k g_k + \sum_{i=k+1}^n \p_t \on{c}_{h_i(t)} g_k|_{t=0} \\
&= v_k g_k + \sum_{i=k+1}^n \p_t h_i(t) g_k h_i(t)\i|_{t=0} \\
&= v_k g_k + \sum_{i=k+1}^n v_i g_k - g_k v_i \\
&= v_k g_k + \sum_{i=k+1}^n \left(\on{Id} - \Ad_{g_k}\right) v_i g_k \\
\end{align}
This concludes the proof.
\end{proof}

In our case the group $G_k$ is the diffeomorphism group $\on{Diff}_{k}(\Om)$ generated by vector fields in the space $H_{k}$ corresponding to the kernel $\mathsf{k}_k$. The subgroup condition $\on{Diff}_{k}(\Om) \supseteq \on{Diff}_{k+1}(\Om)$ is satisfied, if we impose the corresponding condition $H_k \supseteq H_{k+1}$ on the spaces of vector fields, which we will assume from now on.

Starting from an $n$-tuple $v_1(t),\ldots,v_n(t)$ of vector fields, we can reconstruct the diffeomorphisms at each scale via
\begin{equation}
\label{rec_sdp1}
 \p_t \ph_k(t) = \left( v_k(t) + (\Id-\Ad_{\ph_k(t)}) \sum_{i=k+1}^n v_i(t) \right) \o \ph_k(t)\,,
\end{equation}
as in Lemma \ref{lem_sdp1}. We can also sum over all scales to form $v(t) = \sum_{k=1}^n v_k(t)$ and compute the flow $\ph(t)$ of $v(t)$. Then a simple calculation shows that
\begin{equation}
 \ph(t) = \ph_1(t) \o \dots \o \ph_n(t)\,.
\end{equation}
This construction can be summarised by the following commutative diagram
\begin{equation}
\label{comm_findim}
\vcenter{ \xymatrix{
(v_1(t),\ldots,v_n(t))\ar[rrr]^<>(0.5){v(t) = \sum v_k(t)}\ar[d]_{\text{via \eqref{rec_sdp1}}} &&& v(t) \ar[d]^{\p_t \ph(t) = v(t) \o \ph(t)} \\
(\ph_1(t),\ldots,\ph_n(t))\ar[rrr]_<>(0.5){\ph(t)= \ph_1(t) \o \dots \o \ph_n(t)} &&& \ph(t)
}}
\end{equation}

We can now formulate several equivalent versions of LDDMM matching with multiple scales. The most straight-forward way is to do matching with a kernel which is a sum of kernels of different scales. This is the approach considered in \cite{Risser_MK}. 
\begin{defi}[LDDMM with sum-of-kernels]
\label{lddmm_sumofker}
Registering the image $I_0$ to the image $I_{\on{target}}$ is done by finding the minimiser $v(t)$ of
\[  \frac 12 \int_0^1 \| v(t) \|^2_{H} \ud t + d^2( \ph(1).I_0,I_{\on{target}})\enspace,\]
where $\ph(t)$ is the flow of  $v(t)$ and $H$ is the RKHS with kernel $\mathsf{k}=\sum_{i=1}^n \mathsf{k}_i$.
\end{defi}
The corresponding simultaneous multiscale registration problem, where one assigns to each scale a separate vector field, is a special case of the kernel bundle method proposed in \cite{SommerKernelBundle1} and \cite{SommerKernelBundle2}.
\begin{defi}[Simultaneous multiscale registration]
\label{lddmm_simmultscale}
Registering $I_0$ to $I_{\on{target}}$ is done by finding the minimising $n$-tuple $(v_1(t),\ldots,v_n(t))$ of
\[  \frac 12 \sum_{i=1}^n \int_0^1 \| v_i(t) \|^2_{H_i} \ud t + d^2( \ph(1).I_0,I_{\on{target}})\enspace,\]
where $\ph(t)$ is the flow of the vector field $v(t) = \sum_{i=1}^n v_i(t)$.
\end{defi}
The geometric version of the multiscale registration not only uses separate vector fields for each scale, but also decomposes the diffeomorphisms according to scales and can be defined as follows.
\begin{defi}[LDDMM with a semidirect product]
\label{lddmm_sdp1}
Registering $I_0$ to $I_{\on{target}}$ is done by finding the minimising $n$-tuple $(v_1(t),\ldots,v_n(t))$ of
\[  \frac 12 \sum_{i=1}^n \int_0^1 \| v_i(t) \|^2_{H_i} \ud t + d^2( \ph(1).I_0,I_{\on{target}})\enspace,\]
where $\ph(t) = \ph_1(t)\o\dots\o\ph_n(t)$ and $\ph_i(t)$ is defined via \eqref{rec_sdp1}.
\end{defi}
Problem \ref{lddmm_sdp1} can be obtained from the abstract framework in \cite{Bruveris_MK} by considering the action
\begin{equation} \label{sdp1_action} \begin{array}{ccc}
 \left(\on{Diff}_{1}(\Om) \rtimes \dots \rtimes \on{Diff}_{n}(\Om)\right) \times V &\to& V\\
 ((\ph_1,\ldots,\ph_n), I) &\mapsto & I\o\ph_n\i\o\dots\o\ph_1\i
\end{array} \end{equation}
of the semidirect product on the space of images.

\begin{theo}
The matching problems \ref{lddmm_sumofker}, \ref{lddmm_simmultscale} and \ref{lddmm_sdp1} are all equivalent.
\end{theo}

\begin{proof}
The equivalence of problems \ref{lddmm_sumofker} and \ref{lddmm_simmultscale} follows from Proposition \ref{EquivalenceMatching} while the equivalence of problems \ref{lddmm_simmultscale} and \ref{lddmm_sdp1} follows from the diagram \eqref{comm_findim}. For the case $n=2$ the proof can be found in more detail in \cite{Bruveris_MK}.
\end{proof}

This construction can also be generalised to a continuum of scales as introduced in \cite{SommerKernelBundle1} and \cite{SommerKernelBundle2}. We present in the Section \ref{cont_sec} a more general framework to deal with such continuous multi-scale decompositions.

\subsection{The order reversed}
\label{sec_reverse}

The action \eqref{sdp1_action} of the semidirect product from Lemma \ref{lem_sdp1} proceeds by deforming the image with the coarsest scale diffeomorphism first and with the finest scale diffeomorphism last. However, it is also possible to reverse this order and to act with the finest scale diffeomorphisms first. We will see that this approach also corresponds to a semidirect product and is equivalent to the other ordering of scales via a change of coordinates. The reason to expand on this here is that this version is better suited to be generalised to a continuum of scales.

In this section we will assume that the group $G_1$ contains the deformations of the coarsest scale and $G_n$ those of the finest scale. The corresponding semidirect product is described in the following lemma.

\begin{lem}
\label{lem_sdp2}
Let $G_1 \subseteq G_{2} \subseteq \ldots \subseteq G_n$ be a chain of Lie groups. One can define the $n$-fold semidirect product multiplication on the set $G_1\x\dots \x G_n$ via
\begin{equation}
 (g_1, \ldots, g_{n}) \cdot (h_1, \ldots, h_{n}) = (g_1h_1, (\on{c}_{h_1\i} g_2) h_2,\ldots,(\on{c}_{(h_1\cdots h_{n-1})\i} g_n) h_n)
\end{equation}
with $ \on{c}_h g = hgh\i$ denoting conjugation. Then given the right-trivialised tangent vector $v(t) = (v_1(t),\ldots,v_n(t))$ of the curve $g(t)=(g_1(t),\ldots,g_n(t))$, the curve can be reconstructed via the ODE 
\begin{equation}
 \p_t g_k(t) = \Ad_{(g_1(t)\cdots g_{k-1}(t))\i} v_k(t)\enspace,
\end{equation}
if $k\geq 2$ and $\p_t g_1(t) = v(1)g(1)$.
Here $v_kg_k \in T_{g_k} G_k$ denotes the action of the group on its tangent space obtained by differentiating the left-multiplication. We shall denote this product by
\[ G_1 \ltimes \dots \ltimes G_n \]
to emphasize that each subproduct $G_k \ltimes \dots \ltimes G_n$ from the left is a normal subgroup of the whole product.
\end{lem}

\begin{proof}
This lemma can be proven in the same way as Lemma \ref{lem_sdp1}.
\end{proof}

These semidirect products defined in Lemmas \ref{lem_sdp1} and \ref{lem_sdp2} are equivalent as shown by the following lemma
\begin{lem} 
Let $G_1 \supseteq G_{2} \supseteq \ldots \supseteq G_n$ be a chain of Lie groups. The map
\begin{equation}
\Ph : \left\{ \begin{array}{ccc} 
G_1 \rtimes \dots\rtimes G_n & \to & G_n \ltimes \dots \ltimes G_1 \\
(g_1,\ldots,g_n) & \mapsto & (g_n, \ldots, \on{c}_{(g_{n+2-k}\dots g_{n})\i} g_{n+1-k},\ldots,\on{c}_{(g_2\dots g_n)\i} g_1)
\end{array} \right.
\end{equation}
is a group isomomorphism between the two semidirect products, and its derivative at the identity is given by
\begin{equation}
T_e\Ph : \left\{ \begin{array}{ccc} 
\g_1 \rtimes \dots\rtimes \g_n & \to & \g_n \ltimes \dots \ltimes \g_1 \\
(v_1,\ldots,v_n) & \mapsto & (v_n, \ldots, v_{n+1-k}, \ldots, v_1)
\end{array} \right. \enspace,
\end{equation}
with $\g_k$ being the Lie algebra of $G_k$ and $\g_1 \rtimes \dots\rtimes \g_n$ the Lie algebra of $G_1 \rtimes \dots\rtimes G_n$.
\end{lem}

\begin{proof}
Direct computation.
\end{proof}

The map $\Ph$ can be seen as one side of the following commutative triangle
\begin{equation} \xymatrix{ 
G_1 \rtimes \dots\rtimes G_n \ar[rr]^\Ph \ar[dr]_{T_1} && G_n \ltimes \dots\ltimes G_1 \ar[dl]^{T_2} \\
& G_1\times\dots\times G_n &
}
\end{equation}
The maps

\begin{align}
T_1(g_1,\ldots,g_n) &= (g_1\cdots g_n,g_2\cdots g_n,\ldots, g_{n-1}g_n, g_n) \\
T_2(g_n,\ldots,g_1) &= (g_n\cdots g_1,g_n\cdots g_2,\ldots,g_n g_{n-1}, g_n)
\end{align}

are group homomorphisms from the corresponding semidirect products into the direct product $G_1\times\dots \times G_n$. They can be regarded as trivialisation of the semidirect product in the special case that the factors form a chain of subgroups.

We will now assume that we are given $n$ kernels $\mathsf{k}_1,\ldots,\mathsf{k}_n$ with $H_{i} \subseteq H_{i+1}$, i.e. $\mathsf{k}_1$ represents the coarsest scale and $\mathsf{k}_n$ the finest one. Note that the inclusions are reversed as compared to Section \ref{sec_sdp}. The registration problem is then as follows.
\begin{defi}[LDDMM with the reversed order semidirect product]
\label{lddmm_sdp2}
$\null$\\Re\-gi\-ste\-ring  the image $I_0$ to the image $I_{\on{target}}$ is done by finding the minimising $n$-tuple $(v_1(t),\ldots,v_n(t))$ of
\[  \frac 12 \sum_{i=1}^n \int_0^1 \| v_i(t) \|^2_{H_i} \ud t + d^2( \ph(1).I_0,I_{\on{target}})\enspace,\]
where $\ph(t) = \ph_1(t)\o\dots\o\ph_n(t)$ and $\ph_k(t)$ is defined via
\begin{equation}
 \p_t \ph_k(t) = \left(\Ad_{(\ph_1(t)\o\dots\o\ph_{k-1}(t))\i}v_k(t)\right)\o\ph_k(t)\enspace.
\end{equation}
\end{defi}

To see that problems \ref{lddmm_sdp1} and \ref{lddmm_sdp2} are equivalent consider the commuting diagram
\begin{equation}
 \xymatrix{ 
G_1 \rtimes \dots\rtimes G_n \ar[rr]^\Ph \ar[dr]_{\ph=\ph_1\o\dots\o\ph_n} && G_n \ltimes \dots\ltimes G_1 \ar[dl]^{\ph=\ph_n\o\dots\o\ph_1} \\
& G_1 &
}
\end{equation}
and note that $T_e\Ph$ merely reverses the order of the vector fields $(v_1(t),\ldots,v_n(t))$. In particular the minimising vector fields $v_i(t)$ are the same (up to order) and the diffeomorphisms $\ph(1)$ coincide as well. The difference is in the diffeomorphisms $\ph_i(t)$ at each scale. We will see in Section \ref{scale_decomposition} that this version of the semidirect product is better suited to be generalised to a continuum of scales.

\section{Extension to a continuum of scales}
\label{cont_sec}

\subsection{The continuous mixture of kernels}
\label{cont_math}

In this section, we define the multi-scale approach for a continuum of scales. First, we introduce the necessary analytical framework and state some useful results.
\begin{defi}\label{def_admbundle}
Let $\Omega$ be a domain in $\R^{d}$. An \emph{admissible bundle} $(\mc{H},\lambda)$ is a couple consisting of a one-parameter family $\mc{H} := (H_s)_{s \in \R_+^*}$ (where $\R_+^* = ]0,+\infty[$) of admissible reproducing kernel Hilbert spaces of vector fields and a Borel measure $\lambda$ on $\R_+^*$, satisfying the following assumptions:
\begin{enumerate}
\item For any $s$, there exists a positive constant $M_s$ s.t. for every $v \in  H_s$,
\begin{equation}\label{rkhss}
\| v\|_{1,\infty} \leq M_s \| v\|_{H_s}\,.
\end{equation}
\item Denoting $\mathsf{k}_s$ the kernel of the space $H_s$,
the map $\R_+^* \times \R^d \times \R^d \ni (s,x,y) \mapsto \mathsf{k}_s(x,y) \in L(\R^d)$ is Borel measurable.
\item The map $s \mapsto M_s$ is Borel measurable with
\begin{equation} \label{FirstControl}
\int_{\R_+^*}\! M_s^2 \, \ud \lambda(s) < +\infty \,.
\end{equation}
\end{enumerate}
\end{defi}

\begin{remark} \label{RemOnContinuumSetting}
\begin{itemize}
\item Note that no inclusion is a priori required between the linear spaces $H_s$. However the typical example is given by the usual scale-space \emph{i.e.} $H_s$ defined by its Gaussian kernel $e^{-\frac{\|x-y\|^2}{2s^2}}$.
In this case, there exists an obvious chain of inclusions $H_{s} \subset H_{t}$ for $s>t>0$. This also explains our arbitrary choice of the parameter space which is $\R_+^*$. 
\item We have  $\int_{\R_+^*}\! \langle \alpha, \mathsf{k}_s(x,y)\beta \rangle \, \ud \lambda(s) \leq \abs{\alpha} \abs{\beta} \int_{\R_+^*} \! M_s^2 \, \ud \lambda(s)$. This follows from the Cauchy-Schwarz inequality $\langle \alpha, \mathsf{k}_s(x,y)\beta \rangle \leq \| \delta_x^\alpha \|_{H_s^*} \| \delta_y^\beta \|_{H_s^*} $ and the fact that $\| \delta_x^\alpha \|_{H_s^*} \leq |\alpha|M_s$. Recall that the notation $\delta_x^\alpha$ stands for the linear form defined by $(\delta_x^\alpha,f) = \langle f(x), \alpha \rangle$ for $x \in \Omega$ and $\alpha \in \R^d$.
\item The hypotheses in the definition may not be optimal to obtain the needed property, but this context is already large enough for applications.
\end{itemize}
\end{remark}

Mimicking Section \ref{FiniteNumberScales}, we consider the set of vector-valued functions defined on $\R_+^* \times \Omega$, namely denoting by $\mu$ the Lebesgue measure on $\Omega$,
\begin{equation} \label{MySpaceOfFunctions}
V := \left\{ u \in L^2(\R_+^* \times \Omega,\lambda \otimes \mu) \, \Biggl\vert \,\begin{array}{l}(1)\, \forall x \in \Omega \quad s \mapsto u(s,x) \text{ is measurable.}  \\  (2)\, s \mapsto \|u(s,\cdot)\|_{H_s} \text{ is measurable.} \\ (3) \, \int_{\R_+^*}\! \|u(s,\cdot)\|_{H_s}^2 \, \ud\lambda(s)< +\infty\,. \end{array} \right\} \,.
\end{equation}
It is rather standard to prove that $V$ is a Hilbert space for the norm defined by $ \|u\|^2_{V} := \int_{\R_+^*}\! \|u(s,.)\|_{H_s}^2 \, \ud\lambda(s)$.
Note that $V$ contains all the functions $y \mapsto  \mathsf{k}_s(x,y)\alpha$ for any $x \in \Omega$, $\alpha \in \R^d$ and $s \in \R^*_+$: Indeed, that function can be written as $K_s\delta_x^\alpha$ and its norm is then $\|\mathsf{k}_s(x,.)\alpha \|_{H_s}^2 = (\delta_x^\alpha,K_s \delta^\alpha_x) = \| \delta_x^\alpha \|_{H_s^*}^2$ which is finite by the second point of Remark \ref{RemOnContinuumSetting}.

Directly from the assumptions on the space $V$, we can define the set of vector-valued functions 
\begin{equation}
H := \left\{ x \mapsto \int_{\R_+^*}\! v(s,x) \, \ud \lambda(s) \,\biggl\vert \, v \in V \right\}\,.
\end{equation}

Remark that the integral $ \int_{\R_+^*}\! v(s,x) \, \ud \lambda(s)$ is finite using inequality \eqref{rkhss} and hypothesis \eqref{FirstControl} combined with the Cauchy-Schwarz inequality.

Then, the generalisation of Lemma \ref{ReductionLemma} (that generalisation can be found in  \cite{saitohbook,Schwartz64}) reads in our situation:

\begin{theo}\label{ReductionLemmaGeneralisation}
The space $H$ can be endowed with the following norm: For any $v \in H$,
\begin{equation} \label{NormDefinition}
\|v\|_{H}^2 = \inf_{u \in V} \int_{\R_+^*} \! \|u(s,\cdot)\|_{H_s}^2 \, \ud \lambda(s) \,,
\end{equation}
for $u$ satisfying the constraint $v = \int_{\R_+^*} \!u(s,\cdot)\, \ud \lambda(s)$.
This norm turns $H$ into a RKHS whose kernel is $\mathsf{k}(x,y) = \int_{\R_+^*}\! \mathsf{k}_s(x,y) \,\ud \lambda(s)$.
\end{theo}

In our case, the hypotheses on the bundle $(\mc{H},\lambda)$ imply that $H$ is an admissible RKHS.

\begin{proof}
As mentioned above, the linear map 
\begin{align*}
\rho:&\,V \to H \\ 
&\,v \mapsto \int_{\R_+^*} \!v_s\, \ud \lambda(s)
\end{align*}
is well-defined and continuous by the following inequality \eqref{RProp}, so is $\rho^*: H^* \mapsto V^*$. Using Cauchy-Schwarz\rq{}s inequality, we have
\begin{multline} \label{RProp}
\abs{\mathrm{ev}_{(x,\alpha)}(v)} = \left \lvert \int_{\R_+^*}\! \langle v_s(x),\alpha \rangle \,  \ud \lambda(s) \right \rvert  \leq  \abs{\alpha}\sqrt{\int_{\R_+^*}\! \|v(s,\cdot)\|_{H_s}^2 \,\ud \lambda(s)} \, \sqrt{\int_{\R_+^*}\! M_s^2 \,\ud \lambda(s)}\,
\end{multline}
so that, taking the infimum on the affine subspace of $v$ for a given $\rho(v)$, we have
\begin{equation}\label{rkhsProp}
\abs{\mathrm{ev}_{(x,\alpha)}(v)} \leq \abs{\alpha} \|\rho(v)\|_{H} \,  \sqrt{\int_{\R_+^*} \!M_s^2 \, \ud\lambda(s)}\,.
\end{equation}

Hence, the evaluation map $\mathrm{ev}_{(x,\alpha)} : v \in V \mapsto \int_{\R_+^*} \langle v(s,x),\alpha \rangle \,\ud\lambda(s) $ is continuous on $V$.
Therefore the map $\rho$ is continuous for the product topology on $(\R^d)^\Omega$ and its kernel is a closed subspace denoted by $S$. 

As a consequence of the orthogonal projection theorem and denoting by $\pi$ the orthogonal projection on $S^\perp$, we have:
For any $u \in V$, $\pi(u)$ is the unique element in $V$ such that
$\|\pi(u)\|^2_{V} = \inf_{v \in V} \{ \|v\|^2_{V} \,|\,\rho(v) = \rho(u) \}$. Therefore, Equation \eqref{NormDefinition} defines a norm on $H$ and $\rho_{S^\perp}:S^\perp \mapsto H$ is an isometry. In particular, $H$ is a Hilbert space.

Note that inequality \eqref{rkhsProp} shows that $H$ is a RKHS. A direct consequence of this fact is that $H^*\hookrightarrow H_s^*$ a.e. on $\R^*_+$: Indeed, we have
$\delta_x^\alpha \in H_s^* \cap H^*$ and since the span (denoted by $D$) of all such elements is dense in $H^*$ (because $H$ is a RKHS), for any element $p \in H^*$ there exists a Cauchy sequence $p_k \in D$ converging to $p$ in $H^*$ so that $\mu$-a.e. $p_k$ is a Cauchy sequence in $H_s^*$. Denoting by $(\cdot,\cdot)$ the duality pairing,, we have that $(\rho^*(p),v)= \lim_{k \rightarrow +\infty} (\rho^*(p_k),v)$. Then, we deduce for any $v \in V$ 
\begin{equation}
(\rho^*(p),v)=  \lim_{k \rightarrow +\infty} \int_{\R_+^*} \left(p_k,v(s,\cdot)\right) \, \ud\lambda(s) = \int_{\R_+^*} \left(p,v(s,\cdot)\right) \, \ud\lambda(s)\,
\end{equation}
by application of the dominated convergence theorem and the Cauchy-Schwarz inequality.

We now introduce the Lagrangian
\begin{equation}
\ell_u(p,v)=\frac{1}{2}\|v\|_{V}^2 + \left( p, u - \rho(v)\right) \,.
\end{equation}
where $(u,p) \in H \times H^*$ and $v \in V$. It can be checked easily that $(v^\star=\rho_{S^\perp}^{-1}(u),p^\star)$ with $u=Kp^\star$ ($K$ being the Riesz isomorphism between $H^*$ and $H$) is a stationary point of $\ell_u$. So that, we obtain $\rho(v^\star) = u$ and $\lambda$ a.e., $ K_s p^\star  = v^\star_s$.

The Riesz isomorphism is therefore given by
\begin{equation}
K:p \in H^* \mapsto \int_{\R_+^*}\! K_s p \, \ud \lambda(s) \in H \,,
\end{equation}
and the kernel function is given by
\begin{equation}
\mathsf{k}(x,y) =( \delta_x, K \delta_y ) = \int_{\R_+^*} \! \mathsf{k}_s(x,y) \, \ud \lambda(s) \,.
\end{equation}

\end{proof}
\begin{remark}
The hypotheses on the bundle $(\mc{H},\lambda)$ imply that $H$ is an admissible RKHS since we can apply a theorem of differentiation under the integral:
By the hypothesis on the RKHS $H_s$, $\lambda$ a.e. on $\R_+^*$ the map $x \mapsto v(s,x)$ is differentiable at any point $x_0 \in \Omega$ and $\|v(s,\cdot) \|_{1,\infty} \leq M_s\|v(s,\cdot)\|_{H_s}$.
By application of Cauchy-Schwarz\rq{}s inequality, the right-hand side is integrable so that $\rho(v)$ is also differentiable and $C^1$ and
\begin{equation}
\|\rho(v)\|^2_{1,\infty} \leq   \int_{\R_+^*}\! M_s^2 \, \ud \lambda(s) \int_{\R_+^*}\! \| v_s \|^2_{H_s^*}\,  \ud \lambda(s) \,.
\end{equation}
\end{remark}

\subsection{Scale decomposition}
\label{scale_decomposition}

In this section, we will generalise the ideas of Section \ref{sec_sdp} from a finite sum of kernels to a continuum of
scales. We will make some more assumptions to the general setting introduced in Section \ref{cont_math}.
\begin{assump}
We assume that the measure $\la(s)$ is the Lebesgue measure on the finite interval $[0,1]$ and that the family $H_s$ of RKHS is ordered by inclusion
\[ H_s \subseteq H_t \text{ for } s \leq t\,. \]
\end{assump}
This assumption might be relaxed a little bit: As long as $\la(s)$ is absolutely continuous with respect to the Lebesgue measure, i.e. it can be represented via a density $\la(s) = f(s) \ud s$, the same construction can be carried out. The ordering of the inclusions corresponds to that in Section \ref{sec_reverse}.

As in the discrete setting we can formulate the two image matching problems. The first is a direct generalisation of problem \ref{lddmm_sumofker} to a continuum of scales.
\begin{defi}[LDDMM with an integral over kernels]
\label{lddmm_intofker}
Registering the $I_0$ to the image $I_{\on{target}}$ is done by finding the minimiser $v(t)$ of
\[  \frac 12 \int_0^1 \| v(t) \|^2_{H} \ud t + d^2( \ph(1).I_0,I_{\on{target}})\enspace,\]
where $\ph(t)$ is the flow of $v(t)$ and  $\mathsf{k}=\int_0^1\mathsf{k}_s \ud s$ is the integral over the scales and $H$ the corresponding RKHS.
\end{defi}
The other problem associates to each scale a separate vector field. It was proposed in \cite{SommerKernelBundle1, SommerKernelBundle2}, where it was called registration with a kernel bundle. The term kernel bundle refers to the one-parameter family $\mc H = (H_s)_{s\in[0,1]}$ of RKHS.
\begin{defi}[LDDMM with a kernel bundle]
\label{lddmm_intsimmultscale}
Registering the image $I_0$ to the image $I_{\on{target}}$ is done by finding the one-parameter family $v_s(t)$ of vector fields, which minimises
\[  \frac 12 \int_0^1 \int_0^1 \| v_s(t) \|^2_{H_s}  \ud s \ud t + d^2( \ph(1).I_0,I_{\on{target}})\enspace,\]
where $\ph(t)$ is the flow of the vector field $v(t) = \int_0^1 v_s(t) \ud s$.
\end{defi}

These two problems are equivalent, as will be shown in Theorem \ref{cont_equiv}. As a next step we want to obtain a geometric reformulation of the registration problem similar to the problem statements \ref{lddmm_sdp1} or \ref{lddmm_sdp2}. The goal of this reformulation is to decompose the minimising flow of diffeomorphisms $\ph(t)$, such that the effect of each scale becomes visible. In order to do this decomposition we define for fixed $s$:
\begin{equation}
\label{def_psi}
 \ps_s(t) \text{ is the flow of } \int_0^s v_r(t) \ud r \text{ in $t$}\enspace.
\end{equation}
The following theorem allows us to interchange time and scale in the flow $\ps_s(t)$.
\begin{theo}
\label{switch_st}
For each fixed $t$, the one-parameter family $s \mapsto \ps_s(t)$ is the flow in $s$ of the vector field
\[ \Ad_{\ps_s(t)} \int_0^t \Ad_{\ps_s(r)\i} v_s(r) \ud r\enspace.\]
\end{theo}

To prove this theorem we will use the following lemma.
\begin{lem}
\label{switch_st_lemma}
Let $u(s,t,x)$ and $v(s,t,x)$ be two-parameter families of vector fields which are $C^2$ in the $(s,t)$-variables and $C^1$ in $x$. If they satisfy
\begin{equation}
 \p_s u(s,t,x) - \p_t v(s,t,x) = [u(s,t), v(s,t)](x), 
\end{equation}
where $[u,v] = Dv.u - Du.v$ is the Lie algebra bracket (minus the usual Jacobi bracket) for vector fields and $Du$ is the Jacobi matrix of $u:\Om \to \R^d$, and if $v(s,0)\equiv 0$ for all $s$, then the flow of $u(s,.)$ for fixed $s$ coincides with the flow of $v(.,t)$ for fixed $t$.
\end{lem}

\begin{proof}
Denote by $a_s(t)$ the flow of $u(s,.)$ in $t$. Then
\begin{align*}
\p_t\p_s a_s(t) &= \p_s\p_t a_s(t) = \p_s(u(s,t)\o a_s(t)) \\
&= \p_su(s,t)\o a_s(t) + Du(s,t,a_s(t)).\p_sa_s(t) \\
&= \p_t v(s,t)\o a_s(t) + [u(s,t),v(s,t)]\o a_s(t) + Du(s,t,a_s(t)).\p_sa_s(t) \\
&= \p_t \left(v(s,t)\o a_s(t) \right) + Du(s,t,a_s(t)).\left(\p_sa_s(t) - v(s,t)\o a_s(t) \right)\,.
\end{align*}
This implies that $b_s(t) := \p_sa_s(t) - v(s,t)\o a_s(t)$ is the solution of the ODE
\begin{equation} 
\label{helper_ode}
\p_t b_s(t) = Du(s,t,a_s(t)).b_s(t)\enspace.
\end{equation}
Since for $t=0$ we have $b_s(0) = \p_s a_s(0) - v(s,0) \o a_s(0) = 0$, it follows that $b_s(t) \equiv 0$ is the unique solution of \eqref{helper_ode}. This means that
\begin{equation} \p_sa_s(t) = v(s,t)\o a_s(t)\,,
\end{equation}
i.e. the flows of $u(s,.)$ in $t$ and of $v(.,t)$ in $s$ coincide.
\end{proof}

\begin{proof}[Proof of Theorem \ref{switch_st}]
We apply Lemma \ref{switch_st_lemma} to the vector fields 
\[ \int_0^s v_r(t)\ud r \text{ and } \Ad_{\ps_s(t)} \int_0^t \Ad_{\ps_s(r)\i} v_s(r) \ud r\enspace. \]
We can differentiate $\Ad$ using the following rule
\begin{equation}
 \p_t \Ad_{g(t)} u = [\p_t g(t)g(t)\i, \Ad_{g(t)} u]\enspace.
\end{equation}

This can be seen by writing
\begin{align*}
\p_t \Ad_{g(t)} u|_{t=t_0} &= \p_t \Ad_{g(t)g(t_0)\i} \Ad_{g(t_0)} u|_{t=t_0} \\
&= \ad_{\p_t g(t)g(t_0)\i|_{t=t_0}} \Ad_{g(t_0)} u \\
&= [ \p_t g(t)g(t_0)\i|_{t=t_0}, \Ad_{g(t_0)} u]\enspace.
\end{align*}

Using this we can verify the compatibility condition
\begin{align*}
\p_s &\left( \int_0^s v_r(t)\ud r \right) - \p_t \left( \Ad_{\ps_s(t)} \int_0^t \Ad_{\ps_s(r)\i} v_s(r) \ud r \right) = \\
&= v_s(t) - \left[\p_t \ps_s(t) \ps_s(t)\i, \Ad_{\ps_s(t)} \int_0^t \Ad_{\ps_s(r)\i} v_s(r) \ud r\right] - v_s(t) \\
&= \left[\int_0^s v_s(t) \ud r, \Ad_{\ps_s(t)} \int_0^t \Ad_{\ps_s(r)\i} v_s(r) \ud r \right] \enspace .
\end{align*}
The condition $v(s,0)\equiv 0$ is trivially satisfied. This concludes the proof.
\end{proof}

Theorem \ref{switch_st} gives us a way to decompose the matching diffeomorphism $\ph(1)$ into separate scales. As we follow the flow $\ps_s(1)$, we add more and more scales, starting from the identity at $s=0$, when no scales are taken into account and finishing with $\ph(1)$ at $s=1$, which includes all scales. In this sense $\ps_s(1)\i\o\ps_t(1)$ contains the scale information for the scales in the interval $[s,t]$.

\subsection{Restriction to a finite number of scales}
\label{ssec:restrictFinitNbScales}

It is of interest to understand the relationship between a continuum of scales and the case, where we have only a finite number. We will see, that it is possible to see the discrete number of scales as a special case of the continuum of kernels.

Let us start with a family $\mathsf{k}_s$ of kernels with $s\in [0,1]$, where the scales are ordered from the coarsest to the finest, i.e. $H_{s} \leq H_{t}$ for $s \leq t$ as before. Divide the interval $[0,1]$ into $n$ parts $0=s_0 < \ldots < s_n = 1$ and denote the intervals $I_k = [s_{k-1},s_k]$. Let us consider the space
\begin{equation}
 V = \left\{ v\in \mc H: \int_0^1 \| v_s \|^2_{H_s} \ud s < \infty \right\}\enspace,
\end{equation}
which was defined in \eqref{MySpaceOfFunctions} and $\mc H$ in Definition \ref{def_admbundle} to be a one-parameter family of RKHS $H_s$. To each interval $I_k$ corresponds a kernel $\int_{I_k} \mathsf{k}_s \ud s$ and a RKHS $H_k$. The discrete sampling map
\begin{equation}
 \Ps: \left\{ \begin{array}{ccc}
V & \to & H_{1} \times \dots \times H_{n} \\
v & \mapsto & (\int_{I_1} v_s \ud s, \ldots, \int_{I_n} v_s \ud s)
\end{array} \right.\
\end{equation}
discretises $v_s$ into $n$ scales. Formally we can introduce a Lie bracket on the space $V$ by defining
\begin{equation}
\label{cont_bracket}
 [u, v]_s = \left[u_s, \int_0^s v_r \ud r\right] + \left[\int_0^s u_r \ud r, v_s\right]\enspace.
\end{equation}
Using this bracket the sampling map $\Ps$ is a Lie algebra homomorphism as shown in the next theorem.

\begin{theo}
The sampling map $\Ps$ is a Lie algebra homomorphism from the Lie algebra $V$ with the bracket defined in \eqref{cont_bracket} into the $n$-fold semidirect product with the bracket
\begin{equation}
 [(u_1,\ldots,u_n), (v_1,\ldots,v_n)] = ([u_1,v_1], \ldots, [u_k,\sum_{i=1}^{k-1} v_i] + [\sum_{i=1}^{k-1} u_i, v_k] + [u_k,v_k], \ldots )\,.
\end{equation}
\end{theo}

\begin{proof}
Using the definitions we first compute
\begin{align*}
[\Ps(u), \Ps(v)]_k &= \left[\int_{s_{k-1}}^{s_k} u_s \ud s, \int_0^{s_{k-1}} v_s \ud s\right] + \left[\int_{0}^{s_{k-1}} u_s \ud s, \int_{s_{k-1}}^{s_{k}} v_s \ud s\right]  \\
&\phantom{=\quad} + \left[\int_{s_{k-1}}^{s_k} u_s \ud s, \int_{s_{k-1}}^{s_{k}} v_s \ud s\right] \\
&= \left[\int_{0}^{s_k} u_s \ud s, \int_0^{s_{k}} v_s \ud s\right] - \left[\int_{0}^{s_{k-1}} u_s \ud s, \int_0^{s_{k-1}} v_s \ud s\right] \enspace ,
\end{align*}
and then write the other side
\begin{align*}
\Ps([u,v])_k &= \int_{s_{k-1}}^{s_k} \left[u_s, \int_0^s v_r \ud r\right] + \left[\int_0^s u_r \ud r, v_s\right] \ud s
\end{align*}
Below we interchange the order of integration in the first summand and merely switch $s$ and $r$ in the second summand to obtain
\begin{align*}
\int_{0}^{s_k} \left[u_s, \int_0^s v_r \ud r\right] + \left[\int_0^s u_r \ud r, v_s\right] \ud s &= \int_0^{s_k} \int_0^{s_k} 1_{r \leq s} [u_s, v_r] \ud r \ud s \\&+ \int_0^{s_k} \int_0^{s_k} 1_{s\leq r} [u_s, v_r] \ud s \ud r \\
&= \int_0^{s_k} \int_0^{s_k} [u_s, v_r] \ud s \ud r \\
&= \left[ \int_0^{s_k} u_s \ud s, \int_0^{s_k} v_s \ud s \right] \enspace.
\end{align*}
Decomposing the integral into
\[ \Ps([u,v])_k = \int_0^{s_k} \dots \ud s - \int_0^{s_{k-1}} \dots \ud s \]
finishes the proof.
\end{proof}

Now we can show that all matching problems that we defined in the continuous case are equivalent.

\begin{theo}
\label{cont_equiv}
The matching problems \ref{lddmm_intofker} and \ref{lddmm_intsimmultscale} are equivalent and using the sampling map $\Ps$ they are also equivalent to the discrete problem \ref{lddmm_sdp2}.
\end{theo}

\begin{proof}
The first equivalence follows from Theorem \ref{ReductionLemmaGeneralisation}. For the second equivalence note that problem \ref{lddmm_sdp2} is equivalent to \ref{lddmm_sumofker} and use Lemma \ref{ReductionLemma}.
\end{proof}

The diffeomorphisms $\ph_k$ at each scale, that were defined in \ref{lddmm_sdp2} are also contained in the continuous setting. If $u_k = \int_{I_k} v_s \ud s$ is the $k$-th component of the sampling map, then $\ph_1(t)\o\dots\o\ph_k(t)$ is the flow of the vector field $u_1(t)+\dots+u_k(t)$ and we have
\begin{equation}
 u_1(t)+\dots+u_k(t)  = \int_0^{s_k} v_s(t) \ud s\enspace.
\end{equation}
Hence we obtain the identity $\ph_1(t)\o\dots\o\ph_k(t) = \ps_{s_k}(t)$, where $\ps_{s_k}$ was defined in \eqref{def_psi}. In particular we retrieve
\begin{equation}\label{eq:linkPsiPhi}
 \ph_k(t) = \ps_{s_{k-1}}(t)\i\o\ps_{s_k}(t)
\end{equation}
the scale decomposition of the discrete case as a continuous scale decomposition evaluated at specific points.

\section{Conclusion and outlook}
\label{sec:conclusionAndOutlook}

In this paper, we have extended the mixture of kernels presented in \cite{Risser_MK} to  the continuous setting and we have given a variational interpretation of the matching problem. In particular, we have shown that the approaches presented in \cite{SommerKernelBundle1,SommerKernelBundle2} and the mixture of kernels of \cite{Risser_MK} are equivalent.
Motivated by the mathematical development of the multi-scale approach to group of diffeomorphisms, we have extended the semidirect product result of \cite{Bruveris_MK} to more than two discrete scales and also to a continuum of scales. 
In simulations on both synthetic and biological images, we have shown that the extracted diffeomorphisms at each separate scale reveal very interesting information on the structure of the final diffeomorphism. 

Further work will deal with the statistical use of this decomposition. In particular, we will work on the multi-scale description of the variability of organs. This will extend the work of  \cite{VialardMIUA11}, where the authors defined average shapes in the Riemannian framework of the \emph{Large Deformation Diffeomorphic Metric Mapping} also used in the present paper. Defining the average shape of an organ and its variability in a group of subjects is of particular interest in medical image analysis since it allows the automatic detection of abnormalities. The present work opens new perspectives for the multi-scale detection of abnormalities. 

Another direction will be the development of statistics on the initial momentum with respect to the mixture of kernels.   Based on promising results from previous work \cite{RisserMICCAI,Risser_MK}, we plan to expand the use of the multi-scale information a statistical context.
To this aim, it would be very interesting to work on the definition of kernels with sparsely distributed scales 
to improve the statistical power of the scale related deformations or the initial momenta. 
This is in analogy with results in \cite{NIPS2009_0879}, although the conclusions of \cite{Risser_MK} tend to favour non-sparse descriptions of the scales from a purely image matching point of view.
In this direction, more theoretical approaches to learn the scales and weights  involved in the mixture of kernels will be developed in the future. 

\subsection*{Acknowledgements}
We thank Alain Trouv\'e for his insightful remarks on the paper and the reviewer, whose extremely valuable comments helped us to greatly improve the content and the presentation of this work.

\appendix

$\null$\\

\section{Multiscale registration algorithm}\label{app:MultiRegAlgo}

In this appendix, we describe how we register a source/template image $I$ to a target image $J$ when using a kernel $\mathsf{k}$ constructed as the weighted sum of $k$ Gaussian kernels $\mathsf{k}_i(x,y) = a_i  e^{-\|x-y\|^2/\sigma_i^2}Id$. More specifically, we present how we obtain the time dependent velocity fields $u_i(t)$, $i \in \{1, \cdots ,k\}$ on which the scale related diffeomorphisms are built, as show in Section~\ref{ssec:restrictFinitNbScales}.
This algorithm was presented in \cite{Risser_MK}.
Its implementation is also freely available on sourceforge\footnote{http://sourceforge.net/projects/utilzreg/}.
We  introduce $w_i(t)$ as the velocity field updates of $u_i(t)$. The time $t \in [0,1]$ is linearly discretised into $\Theta$ time steps $t_{\theta}$. The registration algorithm is as follows:\\

\begin{algorithmic}
\STATE \COMMENT{{\it Initialisation}}
\STATE The time-dependent velocity fields $u_i$ are initialised to 0
\REPEAT
  \STATE \COMMENT{{\it Compute the mappings and project $\{I,J\}$}}
  \STATE Compute $u (t_{\theta}) = \sum_{i=1}^k u_i (t_{\theta})$ for all $t_{\theta}$
  \STATE Compute the mappings $\psi$ and $\psi^{-1}$ for all $t_{\theta}$ using Eq.~\eqref{eq:integrationVF}
  \STATE Compute $\{I_{t_{\theta}},J_{t_{\theta}}\}$, the projections of $\{I,J\}$ for all $t_{\theta}$ using $\psi$ and $\psi^{-1}$. 
  \FOR{$\theta = 1 \to \Theta $}
    \STATE \COMMENT{{\it Compute the gradient of $u(t_{\theta})$}}
    \STATE $w(t_{\theta}) \leftarrow \left(   | D \psi_{t_{\theta}, 1} |  \nabla I_{t_{\theta}}  (I_{t_{\theta}} - J_{t_{\theta}})     \right)$
    \FOR{$i = 1 \to k$}
      \STATE \COMMENT{{\it Scale dependent smoothing of $w(t_{\theta})$}}
      \STATE $w_i(t_{\theta})  \leftarrow \mathsf{k}_i  \star w(t_{\theta})$
      \STATE \COMMENT{{\it  Update  $u_i$}}
      \STATE $u_i (t_{\theta}) \leftarrow u_i (t_{\theta}) - \epsilon w_i (t_{\theta})$
    \ENDFOR
  \ENDFOR
\UNTIL{Convergence}
\end{algorithmic}

Note that $\epsilon$ is a scalar chosen so that $\max (\epsilon w_i (t_{\theta}))$ is of the order of a voxel at the beginning of the gradient descent.
The integration of the velocity field (Eq.~\eqref{eq:integrationVF}) can be made as follows: to compute $\psi(t_{\theta})$, we first define $\psi(t_1=0)$ as an identity deformation. We then incrementally estimate $\psi(t_{\tau+1})$ using an Euler or a Leap-frog scheme with $\psi(t_{\tau})$, $u(t_{\tau-1})$ and eventually $u(t_{\tau})$, until $t_{\tau}=t_{\theta}$.

$\null$\\

\section{Tuning of the weights}\label{app:WeightsTuning}

Our multi-kernel registration technique depends on  a set of parameters $a_i$, $i \in [1,k]$, each of them controlling the weight of the deformations at a scale $i$. 
As shown in Appendix~\ref{app:MultiRegAlgo}, the gradients of the optimisation algorithm to estimate the velocity fields $v_i$ not only depend on the images $I$ and $J$ but also on the $\mathsf{k}_i$, which are parametrised by the $\sigma_i$ and the $a_i$. Once the characteristic scales $\sigma_i$ are chosen to compare $I$ and $J$, the tuning of the $a_i$ depends on (1) the representation and spatial organisation of the structures in $I$ and $J$ and (2) a knowledge about the expected maximum amplitude of the structures displacement at each scale. We therefore consider $a_i = a_i' / g(\sigma_i,I,J)$, where $g(\sigma_i,I,J)$ is related to (1) and the apparent weight $a_i'$ is related to (2).\\

As described in \cite{Risser_MK}, the apparent weights $a_i'$ are introduced to provide an intuitive control on the tuning of the $a_i$. The maximum amplitude of the deformations should be similar at all scales if all the $a_i'$ are equal. Variations of $a_i'$ should be linearly related with the maximum amplitude of the deformations captured at scale $i$. To do so, we empirically tune the $g(\sigma_i,I,J)$ so that, if all the $a_i'$ equal $1$, then the maximum value of the velocity update $w_i$ (see appendix~\ref{app:MultiRegAlgo}) is the same for all $i$ at the first iteration of the gradient descent algorithm. We can then show that the $g(\sigma_i,I,J)$ can be quickly computed using:
\begin{equation}
g(\sigma_i,I,J) = \max \left( \mathsf{k}_i \star \nabla I  (I - J)  \right) \, ,
\end{equation}
where the smoothing kernel $\mathsf{k}_i$ is not weighted. This strategy is applied once, prior to the gradient descent.\\

This strategy can be slighly modified when a template image $I$ is compared with $M$ images $J_m$ for statistical purposes. In this case, the smoothing kernel $\mathsf{k}$ must be the same for the comparison of all image pairs $\{I,J_m\}$ and the average $g(\sigma_i,I,J_m)$ can be chosen to tune $\mathsf{k}_i$. The strategy described in this paper to distinguish the deformations captured at different scales therefore allows to perform multiscale comparisons with more or less emphasis on each scale $i$ according to the apparent weight $a_i'$. As discussed in Section \ref{sec:conclusionAndOutlook}, this is the main perspective of this work.

\addcontentsline{toc}{section}{References}
\small{
\newcommand{\etalchar}[1]{$^{#1}$}

\bibliographystyle{alpha}
}
\end{document}